\numberwithin{equation}{section}
\newtheorem{theorem}{Theorem}[section] 
\newtheorem{proposition}[theorem]{Proposition}
\newtheorem{corollary}[theorem]{Corollary}
\newcommand{\bigtimes}{\prod}
\begin{document}
\title[Menger algebras of $n$-place interior operations]{Menger algebras of $n$-place interior operations}
\author[W. A. Dudek]{Wieslaw A. Dudek}

\author[V. S. Trokhimenko]{Valentin S. Trokhimenko}

\begin{abstract}
Algebraic properties of $n$-place interior operations on a fixed
set are described. Conditions under which a Menger algebra of rank
$n$ can be represented by $n$-place interior operations are found.
\end{abstract}

\maketitle

\section{Introduction}
It is known \cite{Kur}, that on the topology on
a set $A$ one can talk in the language of open sets, or the
language of closed sets, or the language of interior operations
also called opening operations, or the language of closure
operations. Various types of closure operations on algebraic
systems and their applications are well described (see for example
\cite{Dudtro3}). So, a natural question is about a similar
characterization of interior operations having applications 
in topology and economics. Such operations from an
algebraic point of view were firstly studied by Vagner
\cite{Vagner}. Kulik observed in \cite{Kulik} that the
superposition of two interior operations is not always an interior
operation and found the conditions under which the composition of
two interior operations of a given set $A$ is also an interior
operation of this set. Moreover, he proved that a semigroup $S$ is
isomorphic to a semigroup of interior operations of some set if and
only if $S$ is idempotent and commutative.

Below we introduce the concept of $n$-place interior operations and
find conditions under which a Menger algebra of rank $n$ can be
isomorphically represented by $n$-place interior operations of some
set.

\section{Preliminaries} 
Let $A$ be a nonempty set, $\frak{P}(A)$---the
family of all subsets of $A$, $\mathcal{T}_n(\frak{P}(A))$---the
set of all $n$-place transformations of $\frak{P}(A)$, i.e., maps
$f\colon\stackrel{n}{\bigtimes}\frak{P}(A)\rightarrow\frak{P}(A)$, where
$\stackrel{n}{\bigtimes} \frak{P}(A)$ denotes the $n$-th
Cartesian power of the set $\frak {P}(A)$. For arbitrary
$f,g_1,\ldots,g_n\in \mathcal{T}_n(\frak{P}(A))$ we define the
$(n+1)$-ary composition $f[g_1\ldots g_n]$ by putting:
\[
f[g_1\ldots g_n](X_1,\ldots,X_n)=f(g_1(X_1,\ldots,X_n),\ldots,
g_n(X_1,\ldots,X_n))
\]
for all $X_1,\ldots,X_n\in\frak{P}(A)$.

The $(n+1)$-ary operation $\mathcal{O}\colon (f,g_1,\ldots,g_n)\mapsto f[g_1\ldots g_n]$ is called the
\textit{Menger superposition} of $n$-place functions (cf.\ 
\cite{Dudtro3, SchTr}). Then $(\mathcal{T}_n(\frak{P}(A)), \mathcal{O})$ 
is a \textit{Menger algebra} in the sense of \cite{Dudtro2} and \cite{Dudtro3}, i.e.,
the ope\-ration $\mathcal{O}$ satisfies the so-called
\textit{superassociative law}:
\begin{equation} \label{e1}
f[g_1\ldots g_n][h_1\ldots h_n]=f[g_1[h_1\ldots h_n]\ldots
g_n[h_1\ldots h_n]],
\end{equation}
where $f,g_i,h_i\in\mathcal{T}_n(\frak{P}(A))$, $i=1,\ldots,n$.

We say that $n$-place transformation $f\in\mathcal{T}_n(\frak{P}(A))$ is
\begin{itemize}
\item {\it contractive} if for any $X_1,\ldots,X_n\in\frak{P}(A)$
\begin{equation*}
  f(X_1,\ldots,X_n)\subseteq X_1\cap\dots\cap X_n;
\end{equation*}
\item {\it idempotent} if
\begin{equation*}
  f[f\ldots f]=f;
\end{equation*}
\item {\it isotone} if for any $X_1,\ldots,X_n,Y_1,\ldots,Y_n\in\frak{P}(A)$
\begin{equation*}
  X_1\subseteq Y_1\wedge\dots\wedge X_n\subseteq
  Y_n\Longrightarrow f(X_1,\ldots,X_n)\subseteq f(Y_1,\ldots,Y_n);
\end{equation*}
\item {\it $\cup$-distributive}, if
for all $X,Y,H_1,\ldots,H_n\in\frak{P}(A)$, $i=1,\ldots,n$
\[
 f(H_1^{i-1},X\cup Y,H_{i+1}^n)=f(H_1^{i-1},X,H_{i+1}^n)\cup
 f(H_1^{i-1},Y,H_{i+1}^n),
\]
where $H_s^r$ means $H_s,\ldots,H_r$ for $s\leqslant r$.
\item {\it $\cap$-distributive}, if
   for all $X,Y,H_1,\ldots,H_n\in\frak{P}(A)$, $i=1,\ldots,n$
\[
  f(H_1^{i-1},X\cap Y,H_{i+1}^n)=f(H_1^{i-1},X,H_{i+1}^n)\cap
  f(H_1^{i-1},Y,H_{i+1}^n);
\]
\item {\it full $\cup$-distributive}, if
   for any family of subsets $(X_k)_{k\in K}$ of $A$ and all $H_1,\ldots,H_n\in\frak{P}(A)$, $i=1,\ldots,n$
\[
 f\Big(H_1^{i-1},\bigcup_{k\in K}X_k,H_{i+1}^n\Big)=\bigcup_{k\in K}f\Big(H_1^{i-1},X_k,H_{i+1}^n\Big);
 \]
\item {\it full $\cap$-distributive}, if
   for any family of subsets $(X_k)_{k\in K}$ of $A$ and all $H_1,\ldots,H_n\in\frak{P}(A)$, $i=1,\ldots,n$
\[
 f\Big(H_1^{i-1},\bigcap_{k\in K}X_k,H_{i+1}^n\Big)=\bigcap_{k\in K}
 f\Big(H_1^{i-1},X_k,H_{i+1}^n\Big).
 \]
\end{itemize}

It is not difficult to see that the contractivity of an $n$-place
transformation $f\in\mathcal{T}_n(\frak{P}(A))$ is equivalent to
the system of conditions
\[
f(X_1,\ldots,X_n)\subseteq X_i,\quad i=1,\ldots,n .
\]
The isotonicity is equivalent to the system of $n$ implications
\[
  X\subseteq Y\Longrightarrow f(H_1^{i-1},X,H_{i+1}^n)\subseteq
  f(H_1^{i-1},Y,H_{i +1}^n), \quad i=1,\ldots,n,
\]
where $X,Y,X_1,\ldots,X_n,H_1,\ldots,H_n\in\frak{P}(A)$. 

It is also easy to show that the Menger superposition of contractive
(isotone) $n$-place transformations of $\frak{P}(A)$ is again a
contractive (isotone) $n$-place transformation of $\frak{P}(A)$.

An $n$-place transformation of $\frak{P}(A)$, which is
contractive, idempotent, and isotone is called an {\it $n$-place
interior operation} or an {\it $n$-place interior operator} on the set $A$.

For $n=1$ this definition coincides with the definition of interior
operations proposed by Vagner (see \cite{Vagner}).

\section{Properties of $n$-place interior operations}

We start with the following characterization of
$n$-place interior operations.

\begin{theorem} \label{T-1} For an $n$-place
transformation $f$ of $\frak{P}(A)$ the following conditions are
equivalent:
\begin{itemize}
\item[\textup{(a)}] $f$ is an $n$-place interior operation on $A$;
\item[\textup{(b)}] for all $X_1,\ldots,X_n,Y_1,\ldots,Y_n\in\frak{P}(A)$ we have
\begin{equation} \label{e2}
f(X_1\cap Y_1,\ldots,X_n\cap Y_n)\subseteq
f(f(X_1^n),\ldots,f(X_1^n))\cap f (Y_1^n)\cap Y_1\cap\dots\cap Y_n .
\end{equation}
\end{itemize}
\end{theorem}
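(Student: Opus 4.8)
The plan is to prove the two implications directly from the three defining properties of an interior operation, the key observation being that the term $f(f(X_1^n),\ldots,f(X_1^n))$ occurring in \eqref{e2} is exactly the self-superposition $f[f\ldots f](X_1^n)$. Thus this factor of the right-hand side is governed by idempotency, and once it is simplified the inequality \eqref{e2} breaks into three transparent inclusions, one for each of contractivity, isotonicity, and idempotency.

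For (a) $\Rightarrow$ (b) I would first use idempotency to rewrite $f(f(X_1^n),\ldots,f(X_1^n))=f[f\ldots f](X_1^n)=f(X_1^n)$, so that the right-hand side of \eqref{e2} becomes $f(X_1^n)\cap f(Y_1^n)\cap Y_1\cap\dots\cap Y_n$. Since $X_i\cap Y_i\subseteq X_i$ and $X_i\cap Y_i\subseteq Y_i$ for each $i$, two applications of isotonicity give $f(X_1\cap Y_1,\ldots,X_n\cap Y_n)\subseteq f(X_1^n)$ and $f(X_1\cap Y_1,\ldots,X_n\cap Y_n)\subseteq f(Y_1^n)$, while contractivity yields $f(X_1\cap Y_1,\ldots,X_n\cap Y_n)\subseteq (X_1\cap Y_1)\cap\dots\cap(X_n\cap Y_n)\subseteq Y_1\cap\dots\cap Y_n$. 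Intersecting these three inclusions produces exactly \eqref{e2}.

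For (b) $\Rightarrow$ (a) the strategy is to recover each property by a suitable specialization of \eqref{e2}. Setting $Y_i=X_i$ for all $i$ collapses the left-hand side to $f(X_1^n)$ and forces both $f(X_1^n)\subseteq X_1\cap\dots\cap X_n$, which is contractivity, and $f(X_1^n)\subseteq f[f\ldots f](X_1^n)$. The reverse inclusion $f[f\ldots f](X_1^n)\subseteq f(X_1^n)$ then follows for free by applying the freshly established contractivity to the constant tuple $(f(X_1^n),\ldots,f(X_1^n))$, so idempotency holds. Finally, choosing $X_i\subseteq Y_i$ (so that $X_i\cap Y_i=X_i$) again turns the left-hand side into $f(X_1^n)$ and, reading off the factor $f(Y_1^n)$ on the right, immediately gives $f(X_1^n)\subseteq f(Y_1^n)$, which is isotonicity.

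There is no serious obstacle here; the content of the theorem is essentially the repackaging of three separate conditions into a single inclusion. The one point requiring a little care is idempotency in the direction (b) $\Rightarrow$ (a): the inclusion \eqref{e2} supplies only the half $f(X_1^n)\subseteq f[f\ldots f](X_1^n)$ directly, and the opposite half must be drawn from contractivity rather than from \eqref{e2} itself.
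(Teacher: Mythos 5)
Your proof is correct and follows essentially the same route as the paper's: the same three inclusions (via contractivity and isotonicity applied to $X_i\cap Y_i\subseteq X_i$ and $X_i\cap Y_i\subseteq Y_i$) for (a)~$\Rightarrow$~(b), and the same specializations $Y_i=X_i$ and $X_i\subseteq Y_i$ for (b)~$\Rightarrow$~(a), including the observation that the reverse half of idempotency comes from the just-established contractivity. The only difference is cosmetic: you simplify $f(f(X_1^n),\ldots,f(X_1^n))$ to $f(X_1^n)$ by idempotency at the start, whereas the paper performs that substitution at the end.
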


\begin{proof} 
(a)~$\Longrightarrow$~(b). Suppose that $f$ is an $n$-place interior operation
on $A$. Then, by the contractivity of $f,$
for $X_i,Y_i\in\frak{P}(A)$, $i=1,\ldots,n$, we have
\begin{equation}\label{e3}
f(X_1\cap Y_1,\ldots,X_n\cap Y_n)\subseteq (X_1\cap
Y_1)\cap\dots\cap (X_n\cap Y_n)\subseteq Y_1\cap\dots\cap Y_n .
\end{equation}
As $X_i\cap Y_i\subseteq Y_i$, $i=1,\ldots,n$, the isotonity of $f$ implies
\begin{equation} \label{e4}
f(X_1\cap Y_1,\ldots,X_n\cap Y_n)\subseteq
f(Y_1,\ldots,Y_n)=f(Y_1^n).
\end{equation}
Similarly, $X_i\cap Y_i\subseteq X_i$, $i=1,\ldots,n$, implies
\begin{equation*}
f(X_1\cap Y_1,\ldots,X_n\cap Y_n)\subseteq
f(X_1,\ldots,X_n)=f(X_1^n).
\end{equation*}
Since $f(X_1^n)=f(f(X_1^n),\ldots,f(X_1^n))$, from the above we
obtain
\begin{equation*}
f(X_1\cap Y_1,\ldots,X_n\cap Y_n)\subseteq
f(f(X_1^n)\ldots,f(X_1^n)),
\end{equation*}
which together with  \eqref{e3} and \eqref{e4} gives \eqref{e2}.
Thus, (a) implies (b).

(b)~$\Longrightarrow$~(a).  If an $n$-place transformation $f$
satisfies \eqref{e2}, then putting in \eqref{e2} $X_i=Y_i$,
$i=1,\ldots,n$, we obtain
\begin{equation} \label{e5}
f(X_1^n)\subseteq f(f(X_1^n),\ldots,f(X_1^n))\cap f(X_1^n)\cap X_1\cap\dots\cap X_n.
\end{equation}
So, $f(X_1^n)\subseteq X_1\cap\dots\cap X_n$, i.e., $f$ is
contractive. In addition, \eqref{e5} implies $f(X_1^n)\subseteq
f(f(X_1^n),\ldots,f(X_1^n))$. Since $f$ is contractive, we have
\[
f(f(X_1^n),\ldots,f(X_1^n))\subseteq f(X_1^n)\cap\dots\cap f(X_1^n)=f(X_1^n),
\]
which together with the previous inclusion proves that $f$ is
idempotent.

If $X_i\subseteq Y_i$, then obviously $X_i\cap Y_i=X_i$. Hence, by
\eqref{e2}, for $X_i\subseteq Y_i$, $i=1,\ldots,n$, we have
\begin{align*}
f(X_1^n)&=f(X_1\cap Y_1,\ldots,X_n\cap Y_n) \\
&\subseteq(f(X_1^n),\ldots,f(X_1^n))\cap f(Y_1^n)\cap Y_1\cap\dots\cap Y_n\subseteq f(Y_1^n),
\end{align*}
which means that $f$ is isotone. Thus, $f$ is an $n$-place interior
operation. So, (b) implies (a).
\end{proof}

\begin{theorem} \label{T-2} For an $n$-place
transformation $f$ of  $\frak{P}(A) $ the following conditions are
equivalent:
\begin{itemize}
\item[\textup{(i)}] $f$ is contractive and full $\cup$-distributive;
\item[\textup{(ii)}] $f$ is contractive and  $\cup$-distributive;
\item[\textup{(iii)}] for all $X_1,\ldots,X_n\in\frak{P}(A)$ we have
\begin{equation} \label{e6}
 f(X_1^n)=f(A,\ldots,A)\cap X_1\cap\dots\cap X_n.
\end{equation}
\end{itemize}
\end{theorem}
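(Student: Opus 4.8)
The plan is to close the cycle of implications (i)~$\Longrightarrow$~(ii)~$\Longrightarrow$~(iii)~$\Longrightarrow$~(i). The first link is immediate: full $\cup$-distributivity specializes to ordinary $\cup$-distributivity by taking the two-element family indexed by $K=\{1,2\}$ with $X_1=X$, $X_2=Y$, while contractivity is common to both hypotheses. Hence (i) gives (ii) with no further argument.

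For (iii)~$\Longrightarrow$~(i) I would substitute the formula \eqref{e6} directly. Contractivity is then obvious, since $f(A,\ldots,A)\cap X_1\cap\dots\cap X_n\subseteq X_1\cap\dots\cap X_n$. For full $\cup$-distributivity, set $C=f(A,\ldots,A)$ and $H=\bigcap_{j\ne i}H_j$; applying \eqref{e6} to each side reduces the required identity to
\[
C\cap H\cap\Big(\bigcup_{k\in K}X_k\Big)=\bigcup_{k\in K}\big(C\cap H\cap X_k\big),
\]
which is just the distributive law of $\cap$ over an arbitrary union (the empty family giving $\emptyset$ on both sides).

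The substance lies in (ii)~$\Longrightarrow$~(iii). The idea is to decompose $A=X_i\cup(A\setminus X_i)$ in each of the $n$ coordinates and expand $f(A,\ldots,A)$ by iterating binary $\cup$-distributivity coordinate by coordinate, obtaining a union of $2^n$ terms
\[
f(A,\ldots,A)=\bigcup_{\varepsilon\in\{0,1\}^n}f(Z_1^{\varepsilon_1},\ldots,Z_n^{\varepsilon_n}),\qquad Z_i^{0}=X_i,\quad Z_i^{1}=A\setminus X_i.
\]
Intersecting with $X_1\cap\dots\cap X_n$ and distributing, every term with some $\varepsilon_i=1$ is annihilated: by contractivity $f(Z_1^{\varepsilon_1},\ldots,Z_n^{\varepsilon_n})\subseteq A\setminus X_i$, which is disjoint from $X_i$, so that term meets $X_1\cap\dots\cap X_n$ in $\emptyset$. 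Only the all-zero term $f(X_1^n)$ survives, and since $f(X_1^n)\subseteq X_1\cap\dots\cap X_n$ by contractivity, we are left with $f(A,\ldots,A)\cap X_1\cap\dots\cap X_n=f(X_1^n)$, which is \eqref{e6}.

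I expect the expansion-and-elimination step to be the only genuine obstacle. Two points need care: justifying the $2^n$-fold expansion (a finite induction on the coordinates using binary $\cup$-distributivity), and the observation that a complement in any single coordinate forces the corresponding term to be disjoint from the target intersection $X_1\cap\dots\cap X_n$. Everything else is formal manipulation with the distributive laws for $\cup$ and $\cap$.
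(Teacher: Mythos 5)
Your proof is correct and closes the same cycle (i)$\Rightarrow$(ii)$\Rightarrow$(iii)$\Rightarrow$(i) as the paper; the links (i)$\Rightarrow$(ii) and (iii)$\Rightarrow$(i) coincide with the paper's argument. The only real divergence is in (ii)$\Rightarrow$(iii). The paper fixes a single coordinate $i$ and first proves the identity $f(H_1^{i-1},X,H_{i+1}^n)=f(H_1^{i-1},A,H_{i+1}^n)\cap X$ --- splitting $A=X\cup X'$ by binary $\cup$-distributivity, intersecting with $X$, and using contractivity to annihilate the $X'$-term --- and then chains this identity through the coordinates one at a time, replacing $X_1$, then $X_2$, and so on by $A$; this keeps the bookkeeping linear in $n$. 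You instead expand all $n$ coordinates simultaneously, writing $f(A,\ldots,A)$ as a union of $2^n$ terms indexed by $\{0,1\}^n$, intersect with $X_1\cap\dots\cap X_n$, and kill every term carrying a complement in some coordinate by the very same contractivity/disjointness observation. So the underlying mechanism (complement decomposition plus contractivity-based annihilation) is identical, and the difference is organizational: the paper's route produces a clean, reusable one-coordinate reduction identity and avoids exponential branching, while your one-shot symmetric expansion makes the elimination step perhaps more transparent at the cost of an induction justifying the $2^n$-fold expansion --- a cost you correctly identify and which is easily paid. Both are complete proofs.
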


\begin{proof}
(i)~$\Longrightarrow$~(ii).  Obvious.

(ii)~$\Longrightarrow$~(iii). According to the
$\cup$-distributivity, for all subsets $X,H_1,\ldots,H_n$ of $A$ 
and $i=1,\ldots,n$ we have
\[
f(H_1^{i-1},X,H_{i+1}^n)\cup f(H_1^{i-1},X',H_{i
+1}^n)=f(H_1^{i-1},A,H_{i+1}^n),
\]
where $X'=A\setminus X$. Then clearly
\begin{equation} \label{e7}
  \Big(f(H_1^{i-1},X,H_{i+1}^n)\cap X\Big)\cup\Big(f(H_1^{i-1},X',H_{i+1}^n)\cap X\Big)=  f(H_1^{i-1},A,H_{i+1}^n)\cap X.
\end{equation}

Since the map $f$ is contractive we have
$f(H_1^{i-1},X,H_{i+1}^n)\subseteq X$, which implies
$f(H_1^{i-1},X,H_{i+1}^n)\cap X=f(H_1^{i-1},X,H_{i+1}^n) $.
Similarly $f(H_1^{i-1},X',H_{i+1}^n)\subseteq X'$ and $X'\cap
X=\varnothing $ imply $f(H_1^{i-1},X',H_{i+1}^n)\cap
X=\varnothing$. Thus \eqref{e7} has the form
\[
f(H_1^{i-1},X,H_{i+1}^n)=f(H_1^{i-1},A,H_{i+1}^n)\cap X.
\]
Using this identity we obtain
\begin{align*}
& f(X_1,X_2,X_3,\ldots,X_n)=f(A,X_2,X_3,\ldots,X_n)\cap X_1 \\
& =f(A,A,X_3,\ldots,X_n)\cap X_2\cap X_1 = \cdots \\
& =f(A,A,A,\ldots,A)\cap X_n\cap\dots\cap X_2\cap X_1=f(A,\ldots,A)\cap X_1\cap\dots\cap X_n.
\end{align*}
So, (ii) implies (iii).

(iii)~$\Longrightarrow$~(i). It is not difficult to see that
\eqref{e6} implies $f(X_1,\ldots,X_n)\subseteq X_1\cap\dots\cap
X_n$. Thus, $f$ is contractive. Moreover, in this case we also
have
\begin{align*}
 & f(A,\ldots,A)\cap H_1\cap\dots\cap H_{i-1}\cap\Big(\bigcup_{k\in K}X_k\Big)
\cap H_{i+1}\cap\dots\cap H_n \\
&= \bigcup_{k\in K}\Big(f(A,\ldots,A)\cap H_1\cap\dots\cap
H_{i-1}\cap X_k\cap H_{i+1}\cap\dots\cap H_n\Big)\\
&= \bigcup_{k\in K} f\Big(H_1^{i-1},X_k,H_{i+1}^n\Big)
\end{align*}
by \eqref{e6}. 
So, $f$ is distributive with respect to the union. Thus (iii) implies (i), which completes the proof.
\end{proof}

\begin{corollary} \label{C-1} Every $n$-place transformation $f$ on $\frak{P}(A)$ satisfying $\eqref{e6}$ is an
$n$-place interior operation on $A$.
\end{corollary}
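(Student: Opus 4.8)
The statement is an immediate consequence of the explicit description \eqref{e6}, so the plan is to verify the three defining properties of an $n$-place interior operation --- contractivity, isotonicity, and idempotency --- directly from that formula, rather than routing through the earlier theorems. Contractivity is in fact already recorded in the proof of Theorem~\ref{T-2}: since $f(A,\ldots,A)\cap X_1\cap\dots\cap X_n\subseteq X_i$ for each $i$, formula \eqref{e6} gives $f(X_1^n)\subseteq X_1\cap\dots\cap X_n$, so nothing new is needed there.

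For isotonicity I would argue by the monotonicity of intersection. Assuming $X_i\subseteq Y_i$ for $i=1,\ldots,n$, one has $X_1\cap\dots\cap X_n\subseteq Y_1\cap\dots\cap Y_n$; intersecting both sides with $f(A,\ldots,A)$ and applying \eqref{e6} on each end yields $f(X_1^n)\subseteq f(Y_1^n)$. This is a one-line computation with no hidden content.

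The only step that requires a genuine (though short) computation is idempotency, that is, the identity $f[f\ldots f]=f$, which by the definition of the Menger superposition means $f(f(X_1^n),\ldots,f(X_1^n))=f(X_1^n)$ for all $X_1,\ldots,X_n\in\frak{P}(A)$. I would set $Z=f(X_1^n)$ and apply \eqref{e6} to the $n$-tuple $(Z,\ldots,Z)$, obtaining $f(Z,\ldots,Z)=f(A,\ldots,A)\cap Z$. The crux is that \eqref{e6} itself forces $Z=f(A,\ldots,A)\cap X_1\cap\dots\cap X_n\subseteq f(A,\ldots,A)$, whence the absorption $f(A,\ldots,A)\cap Z=Z$ gives $f(Z,\ldots,Z)=Z=f(X_1^n)$.

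With all three properties checked, $f$ is an $n$-place interior operation. I do not expect a real obstacle: the entire corollary rests on the absorption $Z\subseteq f(A,\ldots,A)$ that is built into \eqref{e6}, and the only point needing care is interpreting the idempotency condition $f[f\ldots f]=f$ correctly through the Menger superposition. One could instead verify that \eqref{e6} implies inequality \eqref{e2} and then invoke Theorem~\ref{T-1}, but that route reduces to the same absorption computation, so the direct verification above is the most economical.
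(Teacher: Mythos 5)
Your proposal is correct and follows essentially the same route as the paper: a direct verification of contractivity, isotonicity, and idempotency from formula \eqref{e6}, with idempotency reduced to the absorption $f(A,\ldots,A)\cap f(X_1^n)=f(X_1^n)$ (the paper carries out the same computation by expanding $f(X_1^n)$ and using $f(A,\ldots,A)\cap f(A,\ldots,A)=f(A,\ldots,A)$). No gaps; the two arguments are interchangeable.
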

\begin{proof} Any transformation satisfying \eqref{e6} is clearly contractive. It is also idempotent because
\begin{align*}
 f(f(X_1^n),\ldots,f(X_1^n))&=f(A,\ldots,A)\cap f(X_1^n) \\
& =f(A,\ldots,A)\cap f(A,\ldots,A)\cap X_1\cap\dots\cap X_n  \\
& =f(A,\ldots,A)\cap X_1\cap\dots\cap X_n \\
&=f(X_1^n)
\end{align*}
for all $X_1,\ldots,X_n\in\frak{P}(A)$.

For $X_1\subseteq Y_1,\ldots,X_n\subseteq Y_n$ we have
$X_1\cap\dots\cap X_n\subseteq Y_1\cap\dots\cap Y_n$. Thus
$f(A,\ldots,A)\cap X_1\cap\dots\cap X_n\subseteq f(A,\ldots,A)
\cap Y_1\cap\dots\cap Y_n$. So, $f(X_1^n)\subseteq f(Y_1^n)$.
Hence $f$ is isotone.
\end{proof}

\begin{corollary} \label{C-2} 
Every \textup(full\textup) $\cup$-distributive
$n$-place interior operation is \textup(full\textup) $\cap$-distributive.
\end{corollary}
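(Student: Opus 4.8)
The plan is to deduce both statements directly from the explicit description of $\cup$-distributive contractive operations supplied by Theorem~\ref{T-2}. Since every $n$-place interior operation is by definition contractive, a $\cup$-distributive interior operation $f$ satisfies the hypotheses of part~(ii) of Theorem~\ref{T-2}; hence, by the equivalence (ii)$\Leftrightarrow$(iii), it obeys formula~\eqref{e6}, namely $f(X_1^n)=f(A,\ldots,A)\cap X_1\cap\cdots\cap X_n$. For the parenthetical ``full'' version I would instead invoke the equivalence (i)$\Leftrightarrow$(iii), which yields the very same formula~\eqref{e6} under the (a priori stronger) hypothesis of full $\cup$-distributivity. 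In both cases the whole content of $\cup$-distributivity has been compressed into~\eqref{e6}, and this is the step that does all the work.

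With \eqref{e6} in hand, checking $\cap$-distributivity is a routine set computation. Writing $B=f(A,\ldots,A)$ and $H=H_1\cap\cdots\cap H_{i-1}\cap H_{i+1}\cap\cdots\cap H_n$, formula~\eqref{e6} reads $f(H_1^{i-1},Z,H_{i+1}^n)=B\cap H\cap Z$ for every subset $Z$ of $A$. Substituting $Z=X\cap Y$ on the left of the $\cap$-distributivity identity and $Z=X$, $Z=Y$ on the right, I would compute
\[
f(H_1^{i-1},X,H_{i+1}^n)\cap f(H_1^{i-1},Y,H_{i+1}^n)=(B\cap H\cap X)\cap(B\cap H\cap Y)=B\cap H\cap X\cap Y,
\]
using only the idempotency of intersection ($B\cap B=B$, $H\cap H=H$); this equals $f(H_1^{i-1},X\cap Y,H_{i+1}^n)$, as required. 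The full $\cap$-distributivity follows by the same collapse applied to an arbitrary nonempty family $(X_k)_{k\in K}$: since each term $B\cap H\cap X_k$ contains the common factor $B\cap H$, one has $\bigcap_{k\in K}(B\cap H\cap X_k)=B\cap H\cap\bigcap_{k\in K}X_k$, which by~\eqref{e6} is precisely $f(H_1^{i-1},\bigcap_{k\in K}X_k,H_{i+1}^n)$.

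There is no genuine obstacle here: once Theorem~\ref{T-2} furnishes the representation~\eqref{e6}, $\cap$-distributivity is immediate, because intersecting two (or arbitrarily many) sets of the form $B\cap H\cap(\,\cdot\,)$ merely re-intersects the constant part $B\cap H$ with itself. The only point deserving a word of care is the empty-family case of full $\cap$-distributivity, where the empty intersection yields $A$ and the identity can fail; I would therefore read the full version with nonempty index sets $K$, in keeping with the intended meaning of the definitions.
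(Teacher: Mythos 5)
Your proposal is correct and follows essentially the same route as the paper: invoke Theorem~\ref{T-2} to replace ($\cup$-distributivity $+$ contractivity) by the explicit formula~\eqref{e6}, and then verify $\cap$-distributivity (and its full version) by a direct intersection computation. Your extra remark about excluding the empty index set $K$ in the full $\cap$-distributive case is a sensible caveat the paper leaves implicit, but it does not change the argument.
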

\begin{proof} Indeed, by Theorem \ref{T-2}, any $\cup$-distributive $n$-place interior
operation $f$ on the set $A$ satisfies \eqref{e6}. Hence
\begin{align*}
& f(H_1^{i-1},X\cap Y,H_{i+1}^n) \\
& =f(A,\ldots,A)\cap H_1\cap\dots\cap H_{i-1}\cap (X\cap Y)\cap H_{i+1}\cap\dots\cap H_n \\
& =\Big(f(A,\ldots,A)\cap H_1\cap\dots\cap H_{i-1}\cap
X\cap H_{i+1}\cap\dots\cap H_n\Big)\cap \\
& \hspace*{2cm}\Big(f(A,\ldots,A)\cap H_1\cap\dots\cap H_{i-1}\cap Y\cap
H_{i+1}\cap\dots\cap H_n\Big) \\
& =f(H_1^{i-1},X,H_{i+1}^n)\cap f(H_1^{i-1},Y,H_{i+1}^n)
\end{align*}
for $X,Y,H_1,\ldots,H_n\in\frak{P}(A)$ and $i=1,\ldots,n$. Thus
$f$ is $\cap$-distributive.

Analogously, we can show that $f$ is full $\cap$-distributive.
\end{proof}

\section{Compositions of $n$-place opening operations} 

On the set $\mathcal{T}_n(\frak{P}(A))$ of
$n$-place transformations of the set $A$ we introduce the binary
relation $\preceq$ defined by
\[
f\preceq g\;\Longleftrightarrow\;(\forall\,
X_1,\ldots,X_n)\,\big(\, f(X_1^n)\subseteq g(X_1^n)\,\big).
\]
It is easy to see that $\preceq$ is a partial order, i.e., it is
reflexive, transitive and antisymmetric.

\begin{proposition} \label{P-1} The relation $\preceq$ has the following properties:
\begin{enumerate}
\item[\textup{(a)}] If $f\in\mathcal{T}_n(\frak{P}(A))$ is contractive, then
\[
f[g_1\ldots g_n]\preceq g_i
\]
for all $g_1,\ldots,g_n\in\mathcal{T}_n(\frak{P}(A))$ and $i=1,\ldots,n$.
\item[\textup{(b)}] If $f\in\mathcal{T}_n(\frak{P}(A))$ is isotone, then
\[
g_1\preceq h_1\wedge\dots\wedge g_n\preceq h_n\Longrightarrow
f[g_1\ldots g_n]\preceq f[h_1\ldots h_n]
\]
for all
$g_1,\ldots,g_n,h_1,\ldots,h_n\in\mathcal{T}_n(\frak{P}(A))$.
\item[\textup{(c)}] If $f\in\mathcal{T}_n(\frak{P}(A))$ is isotone and $g\in\mathcal{T}_n(\frak{P}(A))$ is
contractive, then
\[
f[g\ldots g]\preceq f.
\]
\end{enumerate}
\end{proposition}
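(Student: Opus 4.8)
The plan is to reduce each of the three statements to an inclusion of values at a single, fixed argument tuple $(X_1,\ldots,X_n)$. Indeed, by the very definition of $\preceq$, to establish $u\preceq v$ it suffices to check $u(X_1^n)\subseteq v(X_1^n)$ for all $X_1,\ldots,X_n\in\frak{P}(A)$. In every case I would then unfold the Menger superposition via $f[g_1\ldots g_n](X_1^n)=f(g_1(X_1^n),\ldots,g_n(X_1^n))$ and invoke the appropriate reformulation of contractivity or isotonicity recorded in Section 2 (namely $f(Y_1^n)\subseteq Y_i$ for contractivity, and the componentwise implication for isotonicity).

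For part (a), fix $X_1,\ldots,X_n$ and set $Y_j=g_j(X_1^n)$. Contractivity of $f$ in the form $f(Y_1^n)\subseteq Y_i$ immediately yields $f(g_1(X_1^n),\ldots,g_n(X_1^n))\subseteq g_i(X_1^n)$, that is, $f[g_1\ldots g_n](X_1^n)\subseteq g_i(X_1^n)$; since the tuple was arbitrary, $f[g_1\ldots g_n]\preceq g_i$. For part (b), assume $g_j\preceq h_j$ for $j=1,\ldots,n$, so that $g_j(X_1^n)\subseteq h_j(X_1^n)$ at every argument tuple. Applying the isotonicity of $f$ to the $n$ inclusions $g_1(X_1^n)\subseteq h_1(X_1^n),\ldots,g_n(X_1^n)\subseteq h_n(X_1^n)$ gives $f(g_1(X_1^n),\ldots,g_n(X_1^n))\subseteq f(h_1(X_1^n),\ldots,h_n(X_1^n))$, which is exactly $f[g_1\ldots g_n](X_1^n)\subseteq f[h_1\ldots h_n](X_1^n)$, and the claim follows.

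For part (c), the key observation is that contractivity of $g$ gives $g(X_1^n)\subseteq X_1\cap\dots\cap X_n$, hence $g(X_1^n)\subseteq X_i$ simultaneously for every $i=1,\ldots,n$. Placing the common value $g(X_1^n)$ in each of the $n$ slots of $f$ and applying isotonicity of $f$ to the inclusions $g(X_1^n)\subseteq X_1,\ldots,g(X_1^n)\subseteq X_n$ then delivers $f(g(X_1^n),\ldots,g(X_1^n))\subseteq f(X_1^n)$, i.e. $f[g\ldots g]\preceq f$.

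None of the three parts presents a genuine obstacle; each is a direct application of a single definition once the superposition has been unfolded. The only point worth care is in (c): a single contractive $g$ occupies all $n$ coordinates of $f$, so contractivity must be used to place its value below \emph{every} $X_i$ at once, in order that the componentwise isotonicity of $f$ may be invoked in all slots simultaneously. In this sense (c) blends the contractivity exploited in (a) with the isotonicity exploited in (b).
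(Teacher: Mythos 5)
Your proof is correct. Parts (a) and (b) coincide with the paper's argument: you unfold the Menger superposition at an arbitrary tuple and invoke contractivity, respectively componentwise isotonicity, exactly as the paper does. The only divergence is in part (c): the paper proves nothing there directly, but simply declares (c) to be a consequence of (a) and (b); the implicit derivation is that contractivity of $g$ says precisely $g\preceq e_i$ for each projection $e_i(X_1^n)=X_i$ (equivalently, apply (a) to $g=g[e_1\ldots e_n]$), whence (b) gives $f[g\ldots g]\preceq f[e_1\ldots e_n]=f$. You instead prove (c) by a direct computation: contractivity places the common value $g(X_1^n)$ below every $X_i$ simultaneously, and isotonicity of $f$ in all slots at once yields $f(g(X_1^n),\ldots,g(X_1^n))\subseteq f(X_1^n)$. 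Your route is self-contained and arguably preferable here, since it makes explicit the one-step argument the paper leaves to the reader and avoids introducing the auxiliary projection transformations on which the paper's deduction tacitly relies; what the paper's route buys is brevity and the aesthetic of obtaining (c) purely inside the ordered structure from the two monotonicity statements already established, without returning to the pointwise definitions.
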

\begin{proof} If $f\in\mathcal{T}_n(\frak{P}(A))$ is contractive,
then for all $X_1,\ldots,X_n\in\frak{P}(A)$ and every
$i=1,\ldots,n$ we have
\[
f[g_1\ldots g_n](X_1^n)=f(g_1(X_1^n),\ldots,g_n(X_1^n))\subseteq
g_i(X_1^n).
\]
Hence, $f[g_1\ldots g_n]\preceq g_i$. So, the first property is
proved.

If $f\in\mathcal{T}_n(\frak{P}(A))$ is isotone and $g_i\preceq
h_i$ for $g_i,h_i\in\mathcal{T}_n(\frak{P}(A))$, $i=1,\ldots,n$,
then for all $X_1,\ldots,X_n\in\frak{P}(A)$ we have
$g_i(X_1^n)\subseteq h_i(X_1^n)$, $i=1,\ldots,n$, and consequently
$f(g_1(X_1^n),\ldots,g_n(X_1^n))\subseteq
f(h_1(X_1^n),\ldots,h_n(X_1^n))$. Therefore, $f[g_1\ldots
g_n](X_1^n)\subseteq f[h_1\ldots h_n](X_1^n)$. So, $f[g_1\ldots
g_n]\preceq f[h_1\ldots h_n]$, which proves the second condition.

The condition (c) is a consequence of (a) and (b).
\end{proof}

\begin{theorem} \label{T-3} The Menger superposition of given $n$-place interior
operations $f,g_1,\ldots,g_n$ defined on the set $A$ is an
$n$-place interior operation on $A$ if and only if for each
$i=1,\ldots,n$ we have
\begin{equation} \label{e8}
g_i[f\ldots f][g_1\ldots g_n]=f[g_1\ldots g_n].
\end{equation}
\end{theorem}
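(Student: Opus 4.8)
The plan is to set $h=f[g_1\ldots g_n]$ and to observe first that, since each of $f,g_1,\ldots,g_n$ is in particular contractive and isotone, the fact noted in the Preliminaries---that the Menger superposition of contractive (isotone) $n$-place transformations is again contractive (isotone)---shows that $h$ is \emph{automatically} contractive and isotone. Consequently $h$ is an $n$-place interior operation if and only if $h$ is idempotent, i.e.\ $h[h\ldots h]=h$. This reduces the theorem to proving the equivalence of the idempotency of $h$ with condition \eqref{e8}, so contractivity and isotonicity never have to be re-examined.

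Next I would record two purely formal consequences of the superassociative law \eqref{e1}. Applying \eqref{e1} with outer symbol $g_i$ gives $g_i[f\ldots f][g_1\ldots g_n]=g_i[f[g_1\ldots g_n]\ldots f[g_1\ldots g_n]]=g_i[h\ldots h]$, so \eqref{e8} is equivalent to the system $g_i[h\ldots h]=h$ for $i=1,\ldots,n$. The second identity, which I expect to use repeatedly, is $f[h\ldots h]=h$: indeed $f[h\ldots h]=f[f\ldots f][g_1\ldots g_n]$ by \eqref{e1}, and idempotence of $f$ collapses the inner block $f[f\ldots f]$ to $f$, leaving $f[g_1\ldots g_n]=h$. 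Finally, again by \eqref{e1}, $h[h\ldots h]=f[g_1[h\ldots h]\ldots g_n[h\ldots h]]$.

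With these identities the implication from \eqref{e8} to idempotence is immediate: if $g_i[h\ldots h]=h$ for every $i$, the last display becomes $f[h\ldots h]$, which equals $h$; hence $h$ is idempotent, and therefore an interior operation.

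The harder direction is the converse, and this is where the actual work lies. Assuming $h$ idempotent, I must recover $g_i[h\ldots h]=h$ for each $i$, and I would argue pointwise. Fix $X_1,\ldots,X_n$, write $Y=h(X_1^n)$ and $Z_i=g_i[h\ldots h](X_1^n)=g_i(Y,\ldots,Y)$. Contractivity of $g_i$ gives $Z_i\subseteq Y$. On the other hand, idempotence of $h$ together with $h[h\ldots h](X_1^n)=f(Z_1,\ldots,Z_n)$ forces $f(Z_1,\ldots,Z_n)=Y$, and then contractivity of $f$ yields $Y=f(Z_1,\ldots,Z_n)\subseteq Z_i$. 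The two inclusions give $Z_i=Y$, that is $g_i[h\ldots h]=h$, which by the rewriting above is exactly \eqref{e8}. The crux is recognizing that the two contractivity hypotheses---one for the outer $f$ and one for each inner $g_i$---pinch $Z_i$ between $Y$ and itself; the rest is bookkeeping with \eqref{e1}.
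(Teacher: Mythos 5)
Your proof is correct and follows essentially the same route as the paper: the backward direction is the identical substitution argument via \eqref{e1}, and your forward direction's two inclusions $Z_i\subseteq Y$ (contractivity of $g_i$) and $Y=f(Z_1,\ldots,Z_n)\subseteq Z_i$ (idempotence of $h$ plus contractivity of $f$) are exactly the paper's two inequalities $g_i[h\ldots h]\preceq h$ and $h\preceq g_i[h\ldots h]$, merely unpacked pointwise instead of stated via the order $\preceq$ and Proposition \ref{P-1}.
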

\begin{proof} $(\Longrightarrow)$.  Suppose that $f,g_1,\ldots,g_n$ and $f[g_1\ldots g_n]$ are $n$-place
interior ope\-ra\-tions on $A$. Since each $g_i$ is contractive,
according to \eqref{e1} and Proposition \ref{P-1}, we obtain
\[
g_i[f\ldots f][g_1\ldots g_n]=g_i[f[g_1\ldots g_n]\ldots
f[g_1\ldots g_n]]\preceq f[g_1\ldots g_n].
\]
On the other side, using \eqref{e1} and the fact that $f[g_1\ldots
g_n]$ is an idempotent $n$-place transformation, we get
\begin{align*}
 f[g_1\ldots g_n]&=f[g_1\ldots g_n][f[g_1\ldots g_n]\ldots f[g_1\ldots g_n]] \\
&= f[g_1[f[g_1\ldots g_n]\ldots f[g_1\ldots g_n]]\ldots
g_n[f[g_1\ldots g_n]\ldots f[g_1\ldots g_n]]]\\
& \preceq g_i[f[g_1\ldots g_n]\ldots f[g_1\ldots g_n]]\\
&=g_i[f\ldots f][g_1\ldots g_n] ,
\end{align*}
which together with the previous inequality gives \eqref{e8}.

$(\Longleftarrow)$.  As it was mentioned above, Menger
superposition preserves contractivity and isotonicity. We show
that $f[g_1\ldots g_n]$ is idempotent. Indeed, according to
\eqref{e1} and \eqref{e8}, we have

\begin{align*}
& f[g_1\ldots g_n][f[g_1\ldots g_n]\ldots f[g_1\ldots g_n]] \\
& =f[g_1[f[g_1\ldots g_n]\ldots f[g_1\ldots g_n]]\ldots
g_n[f[g_1\ldots g_n]\ldots f[g_1\ldots g_n]]] \\
& =f[g_1[f\ldots f][g_1\ldots g_n]\ldots g_n[f\ldots f][g_1\ldots g_n]] \\
& =f[f[g_1\ldots g_n]\ldots f[g_1\ldots g_n]]= f[f\ldots f]
[g_1\ldots g_n]\\
& =f[g_1\ldots g_n].
\end{align*}
Thus, $f[g_1\ldots g_n]$ is an $n$-place interior operation.
\end{proof}

\section{Algebras derived from their diagonal semigroups}

Recall (cf.\ \cite{Dudtro2}, \cite{Dudtro3}),
that a {\it Menger algebra $(G,o)$ of rank $n$} is a nonempty set
$G$ with an $(n+1)$-ary operation $o\colon(f,g_1,\ldots,g_n)\mapsto
f[g_1\ldots g_n]$ satisfying the identity \eqref{e1}. On such
algebra we can define a binary operation $*$ by setting
$$x*y=x[y\ldots y]$$ for any $x,y\in G$. It is easy to see that
$(G,*)$ is a semigroup. It is called the {\it diagonal semigroup}
of $(G,o)$. In the case when
$$
f[g_1g_2\ldots g_n]=f*g_1*g_2*\ldots *g_n
$$
we say that a Menger algebra $(G,o)$ is {\it derived} from its
diagonal semigroup $(G,*)$.

\begin{proposition} \label{P-2} For any $n$-place interior operations $f,g$ on the set $A$ the following
three conditions are equivalent:
\begin{enumerate}
\item[\textup{(a)}] \ $f*g$ is an $n$-place interior operation on $A$;
\item[\textup{(b)}] \ $f*g=g*f*g$;
\item[\textup{(c)}] \ $f*g=f*g*f$.
  \end{enumerate}
\end{proposition}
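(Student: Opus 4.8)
The plan is to let Theorem~\ref{T-3} settle the equivalence (a)~$\Leftrightarrow$~(b) almost for free, and then to obtain (c) by an order argument resting on Proposition~\ref{P-1}. First I would specialize Theorem~\ref{T-3} to the diagonal case $g_1=\dots=g_n=g$, so that $f[g\ldots g]=f*g$. Its condition \eqref{e8} then reads $g[f\ldots f][g\ldots g]=f[g\ldots g]$ (all indices $i$ give the same relation), and by the superassociative law \eqref{e1} the left-hand side collapses, since $g[f\ldots f][g\ldots g]=g[f[g\ldots g]\ldots f[g\ldots g]]=g[(f*g)\ldots(f*g)]=g*(f*g)=g*f*g$, the last step by associativity of $(G,*)$. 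As the right-hand side is $f*g$, Theorem~\ref{T-3} asserts exactly that $f*g$ is an $n$-place interior operation if and only if $g*f*g=f*g$; that is, (a)~$\Leftrightarrow$~(b).

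For (c) I would prove $f*g*f=f*g$ by establishing the two $\preceq$-inclusions separately. The inclusion $f*g*f\preceq f*g$ needs no hypothesis at all: writing $f*g*f=(f*g)[f\ldots f]$ and using that $f*g$ is isotone (a Menger superposition of isotone maps is isotone) while $f$ is contractive, Proposition~\ref{P-1}(c) delivers it immediately.

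The reverse inclusion is the heart of the matter and the step I expect to be the only real obstacle. Assuming (a), the operation $f*g$ is idempotent, hence $(f*g)*(f*g)=f*g$. By Proposition~\ref{P-1}(c) applied to the isotone map $f$ and the contractive map $g$, we also have $f*g=f[g\ldots g]\preceq f$. Feeding $f*g\preceq f$ into the monotonicity statement Proposition~\ref{P-1}(b), with the isotone outer map $f*g$, gives $(f*g)[(f*g)\ldots(f*g)]\preceq(f*g)[f\ldots f]$, i.e.\ $f*g=(f*g)*(f*g)\preceq(f*g)*f=f*g*f$. Combined with the previous paragraph this yields $f*g*f=f*g$, so (a)~$\Rightarrow$~(c). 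I would stress that this step genuinely uses the order structure (contractivity and isotonicity through $\preceq$), rather than being a formal consequence of (b) in the semigroup alone.

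Finally, (c)~$\Rightarrow$~(a) is a one-line computation: multiplying $f*g*f=f*g$ on the right by $g$ and using the idempotency $g*g=g$ gives $f*g*f*g=f*g*g=f*g$, whence $(f*g)*(f*g)=f*g$, so $f*g$ is idempotent; being automatically contractive and isotone, it is an $n$-place interior operation. Together with (a)~$\Leftrightarrow$~(b) and (a)~$\Rightarrow$~(c) this closes all implications and proves the proposition.
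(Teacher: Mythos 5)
Your proof is correct; it uses the same ingredients as the paper (Theorem \ref{T-3}, Proposition \ref{P-1}, and a closing semigroup computation) but arranges the implications differently. The paper proves the cycle (a) $\Rightarrow$ (b) $\Rightarrow$ (c) $\Rightarrow$ (a): it invokes only the forward direction of Theorem \ref{T-3} for (a) $\Rightarrow$ (b), and its order-theoretic work is spent on (b) $\Rightarrow$ (c), where from the identity $f*g=g*f*g$ together with $f*g\preceq f$ it derives $f*g\preceq g*f$ and then $f*g=f*f*g\preceq f*g*f\preceq f*g$. You instead exploit that Theorem \ref{T-3} is an equivalence: specializing $g_1=\dots=g_n=g$ and collapsing $g[f\ldots f][g\ldots g]$ to $g*f*g$ via superassociativity gives (a) $\Leftrightarrow$ (b) in one stroke, and your order-theoretic work goes into (a) $\Rightarrow$ (c), where idempotency of $f*g$ plus $f*g\preceq f$ and Proposition \ref{P-1}(b),(c) yield $f*g=(f*g)*(f*g)\preceq (f*g)*f\preceq f*g$. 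Your (c) $\Rightarrow$ (a) coincides with the paper's. The trade-off: your route makes (b) $\Rightarrow$ (a) immediate and the key order argument slightly shorter, since idempotency of $f*g$ is fed in directly; the paper's route shows that (c) follows from the identity (b) by pure order manipulation without re-invoking (a), and it never needs the converse half of Theorem \ref{T-3}. Both are complete and all six implications are covered in each scheme.
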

\begin{proof} (a)~$\Longrightarrow$~(b).  This implication follows from Theorem \ref{T-3}.

(b)~$\Longrightarrow$~(c). By Proposition \ref{P-1} we
have $f[g\ldots g]\preceq f$, i.e., $f*g\preceq f$. This together
with (b) shows $f*g=g*f*g\preceq g*f$. So, $f*g\preceq
g*f$, which in view of Proposition \ref{P-1} (b) gives
$f*f*g\preceq f*g*f$. Hence, $f*g=f*f*g\preceq f*g*f\preceq f*g$,
and consequently, $f*g=f*g*f$.

(c)~$\Longrightarrow$~(a).  Since Menger
superposition preserves contractivity and isotonicity, then $f*g$
is an isotone and contractive $n$-place transformation. We show
that it is idempotent. In fact,
\[
(f*g)*(f*g)=(f*g*f)*g=(f*g)*g=f*(g*g)=f*g.
\]
Thus, $f*g$ is an $n$-place interior operation.
\end{proof}

\section{Characterizations of algebras of $n$-place interior operations} 

An abstract characterization of Menger algebras
of $n$-place interior operations is given in the following theorem.

\begin{theorem} \label{T-4} A Menger algebra $(G,o)$
of rank $n$ is isomorphic to a Menger algebra of $n$-place
interior operations on some set if and only if it satisfies the
following three identities
\begin{align}
& \label{e9} x[x\ldots x]=x, \\
& \label{e10} x[y\ldots y]=y[x\ldots x], \\
& \label{e11} x[y_1\ldots y_n]=x[y_1\ldots y_1]\ldots [y_n\ldots y_n].
\end{align}
\end{theorem}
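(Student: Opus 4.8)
The plan is to prove both implications by reading the three identities structurally: \eqref{e9} and \eqref{e10} say that the diagonal semigroup $(G,*)$, with $x*y=x[y\ldots y]$, is idempotent and commutative, hence a semilattice, and \eqref{e11} says that $(G,o)$ is derived from $(G,*)$, i.e.\ $x[y_1\ldots y_n]=x*y_1*\cdots*y_n$. The whole argument is organized around this observation: necessity is the statement that a Menger algebra of interior operations is always such a semilattice-derived algebra, and sufficiency is the construction of a representation out of the semilattice.

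For necessity, suppose $(G,o)$ is isomorphic to a Menger algebra $\mathcal{F}$ of $n$-place interior operations on a set $A$; since the identities are isomorphism invariant it suffices to verify them in $\mathcal{F}$. Identity \eqref{e9} is just idempotency of interior operations. For \eqref{e10}, observe that $f*g$ and $g*f$ both lie in $\mathcal{F}$ and so are interior operations; Proposition~\ref{P-2} then gives $f*g=f*g*f$ and $g*f=f*g*f$, whence $f*g=g*f$. The subtle identity is \eqref{e11}, and I would prove it entirely through the order $\preceq$. Put $F=f[g_1\ldots g_n]$ and $q=g_1*\cdots*g_n$. Since $F\in\mathcal{F}$ is an interior operation, Theorem~\ref{T-3} yields \eqref{e8}, and reading \eqref{e8} through superassociativity gives $g_i*F=F$ for each $i$; idempotency of $F$ together with these equalities gives $f*F=F$. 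From $g_i*F=F$ one deduces $q*F=F$, hence $F\preceq q$, so Proposition~\ref{P-1}(b) upgrades this to $F=f*F\preceq f[q\ldots q]=f*g_1*\cdots*g_n$. Conversely $q\preceq g_i$ for all $i$, so a second application of Proposition~\ref{P-1}(b) gives $f[q\ldots q]\preceq f[g_1\ldots g_n]=F$. The two inequalities force equality, which is precisely \eqref{e11}.

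For sufficiency I would build the representation on the set $A=G$. The identities make $(G,*)$ a semilattice; ordering it by $x\le a\Leftrightarrow x*a=x$, I embed it into $(\mathfrak{P}(G),\cap)$ by the principal-ideal map $\varphi(a)=\{x\in G:x*a=x\}$. Using commutativity and idempotency of $*$, a short check shows $\varphi$ is injective and $\varphi(a*b)=\varphi(a)\cap\varphi(b)$. For each $a\in G$ I then define an $n$-place transformation of $\mathfrak{P}(G)$ by $\lambda_a(X_1,\ldots,X_n)=\varphi(a)\cap X_1\cap\cdots\cap X_n$, which by Corollary~\ref{C-1} is an $n$-place interior operation on $G$, since it satisfies \eqref{e6} with $\lambda_a(G,\ldots,G)=\varphi(a)$. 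The representing map is $\Lambda\colon a\mapsto\lambda_a$.

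It remains to verify that $\Lambda$ is an injective homomorphism onto its image. Injectivity is immediate from $\lambda_a(G,\ldots,G)=\varphi(a)$ and injectivity of $\varphi$. For the homomorphism property one computes $\lambda_f[\lambda_{g_1}\ldots\lambda_{g_n}](X_1^n)=\varphi(f)\cap\varphi(g_1)\cap\cdots\cap\varphi(g_n)\cap X_1\cap\cdots\cap X_n$, the repeated copies of $X_1\cap\cdots\cap X_n$ collapsing by idempotency of intersection; then $\varphi(a*b)=\varphi(a)\cap\varphi(b)$ rewrites the $\varphi$-part as $\varphi(f*g_1*\cdots*g_n)$, and \eqref{e11} identifies this with $\varphi(f[g_1\ldots g_n])$, so the composite is exactly $\lambda_{f[g_1\ldots g_n]}$. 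Hence $\Lambda(G)$ is a Menger algebra of $n$-place interior operations isomorphic to $(G,o)$. I expect the main obstacle to be the necessity of \eqref{e11}: that an arbitrary Menger algebra of interior operations is automatically derived from its diagonal semigroup is not visible from the pointwise formulas and genuinely needs Theorem~\ref{T-3} to supply \eqref{e8}; once that is available, the semilattice embedding makes the remainder routine, with \eqref{e11} reappearing as the exact identity needed to turn the $(n+1)$-ary composition into an intersection of principal ideals.
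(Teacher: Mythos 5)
Your proof is correct and takes essentially the same route as the paper: necessity via Proposition~\ref{P-2} for \eqref{e10} and via Theorem~\ref{T-3} plus the order $\preceq$ of Proposition~\ref{P-1} for \eqref{e11}, and sufficiency via the identical representation $a\mapsto\varphi(a)\cap X_1\cap\dots\cap X_n$, where your $\varphi(a)$ is exactly the paper's $\omega\langle a\rangle$ (by commutativity of $*$), with Corollary~\ref{C-1} supplying the interior-operation property. The only cosmetic difference is that you obtain the inequality \eqref{e12} from $q*F=F$ and Proposition~\ref{P-1}(c) instead of the paper's inductive chain \eqref{e11a}, which is a minor streamlining of the same argument.
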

\begin{proof} {\sc Necessity.} Let $(\Phi,\mathcal{O})$, where $\Phi\subset\mathcal{T}_n(\frak{P}(A))$, be a
Menger algebra of $n$-place interior operations on the set $A$.
Then obviously $f[f\ldots f]=f$ for all $f\in\Phi$. Thus, the
condition \eqref{e9} is satisfied.

If $f,g\in\Phi$, then also $f[g\ldots g], g[f\ldots f]\in\Phi$.
Therefore $f*g$ and $g*f$ are $n$-place interior operations and by
(b), (c) from Proposition \ref{P-2} we have
$f*g=g*f$. Thus, $f[g\ldots g]=g[f\ldots f]$. Hence the condition
\eqref{e10} also is satisfied.

Further, using \eqref{e1} and Theorem \ref{T-3}, for
$f,g_1,\ldots,g_n\in\Phi$ and for each $i=1,\ldots,n$, we obtain
\[
g_i*f[g_1\!\ldots g_n]=g_i[f[g_1\!\ldots g_n]\ldots f[g_1\!\ldots g_n]]=
g_i[f\ldots f][g_1\!\ldots g_n]=\!f[g_1\!\ldots g_n].
\]
Thus
\[
  g_i*f[g_1\ldots g_n]=f[g_1\ldots g_n]
\]
for all $i=1,\ldots,n$. Consequently
\begin{align*}
 (f*g_1*\ldots*g_i)*f[g_1\ldots g_n]&
= (f*g_1*\ldots*g_{i-1})*(g_i*f[g_1\ldots g_n]) \\
&=(f*g_1*\ldots*g_{i-1})*f[g_1\ldots g_n]\\
&\hspace*{6pt}\vdots \\
&=f*f[g_1\ldots g_n] \\
& =f[f[g_1\ldots g_n]\ldots f[g_1\ldots g_n]]\\
&= f[f\ldots f][g_1\ldots g_n]\\
&=f[g_1\ldots g_n].
\end{align*}
Hence
\begin{equation}\label{e11a}
(f*g_1*\ldots *g_i)*f[g_1\ldots g_n]=f[g_1\ldots g_n]
\end{equation}
for every $i=1,\ldots,n$. Since $f[g_1\ldots g_n]\preceq g_n$ and
\[
(f*g_1*\ldots*g_{n-1})*f[g_1\ldots g_n]\preceq (f*g_1*\ldots
*g_{n-1})*g_n,
\]
by Proposition \ref{P-1}, the equality \eqref{e11a} means that
\begin{equation} \label{e12}
f[g_1\ldots g_n]\preceq f* g_1*\ldots*g_n.
\end{equation}
By Proposition \ref{P-1} we also have
\[
f*g_1*\ldots*g_n\preceq f*g_1*\ldots*g_i\preceq g_i
\]
for all $i=1,\ldots,n$. Therefore
\[
f[(f*g_1*\ldots*g_n)\ldots (f*g_1*\ldots*g_n)]\preceq f[g_1\ldots
g_n],
\]
i.e., $f*(f*g_1*\ldots*g_n)\preceq f[g_1\ldots g_n]$, whence, by
$f*f=f$, we obtain
\[
  f*g_1*\ldots *g_n\preceq f[g_1\ldots g_n].
\]
This, together with \eqref{e12}, gives $f[g_1\ldots
g_n]=f*g_1*\ldots*g_n$. Thus, the condition \eqref{e11} is
satisfied too.

{\sc Sufficiency.} Let $(G,o)$ be a Menger algebra of rank $n$
satisfying all the conditions of the theorem and let $(G,*)$ be
its diagonal semigroup. Consider a binary relation $\omega$
defined on the set $G$ as follows:
\[
  \omega=\{(x,y)\mid x*y=y\}.
\]
Since the diagonal semigroup $(G,*)$ is semilattice, the relation $\omega $ is reflexive, transitive and
antisymmetric. So, the relation $\omega$ is just the usual ordering on a semilattice (treated as a join semilattice), and  
\[
\omega\langle x\rangle=\{y\in G\mid (x,y)\in\omega\}
\]
is just the upset in this ordering.

We show that
\begin{equation}\label{e13}
  \omega\langle x[y_1\ldots y_n]\rangle=\omega\langle x\rangle\cap\omega\langle y_1\rangle
\cap\dots\cap\omega\langle y_n\rangle
\end{equation}
for $x,y_1,\ldots,y_n\in G$.

Since the diagonal semigroup $(G,*)$ is semilattice, the following equation
\[
\omega\langle x*y\rangle=\omega\langle x\rangle\cap\omega\langle
y\rangle
\]
holds in $(G,*)$.

Using this equation and \eqref{e11} we obtain
\begin{align*}
\omega\langle x[y_1\ldots y_n]\rangle&=\omega\langle x*y_1
*\dots *y_n\rangle=\omega\langle x*(y_1*\dots *y_n)\rangle\\
&=\omega\langle x\rangle\cap\omega\langle y_1\!*(y_2\!*\dots
*y_n)\rangle =\omega\langle x\rangle\cap\omega\langle
y_1\rangle\cap\omega\langle y_2*\dots*y_n\rangle \\
&= \dots=\omega\langle x\rangle\cap\omega\langle y_1\rangle
\cap\dots\cap\omega\langle y_n\rangle,
\end{align*}
which proves \eqref{e13}.

Consider the set $\Phi=\{f_g\mid g\in G\}\subseteq\mathcal{T}_n
(\frak{P}(G))$ of all $n$-place operations $f_g$ defined by:
\begin{equation} \label{e14}
f_g(X_1^n)=\omega\langle g\rangle\cap X_1\cap\dots\cap X_n
\end{equation}
and the map $P\colon g\mapsto f_g$.

Obviously $f_g(G,\ldots,G)=\omega\langle g\rangle$. Thus
$f_g(X_1^n)=f_g(G,\ldots,G)\cap X_1\cap\dots\cap X_n$. Hence, by
Corollary \ref{C-1}, $f_g$ is an $n$-place interior operation on
the set $G$. More\-over, for all $g,g_1,\ldots,g_n$ and
$X_1,\ldots,X_n\in\frak{P}(G)$ we have
\begin{align*}
 &f_g [f_{g_1}\ldots f_{g_n}] (X_1^n)=f_g(f_{g_1}(X_1^n),
\ldots,f_{g_n}(X_1^n)) \\
&= \omega\langle g\rangle\cap f_{g_1}(X_1^n)\cap\dots\cap f_{g_n}(X_1^n) \\
&= \omega\langle g\rangle\cap\big(\omega\langle g_1\rangle\cap
X_1\cap\dots\cap X_n\big)\cap\dots\cap\big(\omega\langle
g_n\rangle\cap X_1\cap\dots\cap X_n\big)\\
& =\big(\omega\langle g\rangle\cap\omega\langle g_1
\rangle\cap\dots\cap\omega\langle g_n\rangle\big)\cap X_1
\cap\dots\cap X_n\\
&\stackrel{\eqref{e13}}{=} \omega\langle g[g_1\ldots g_n]\rangle\cap X_1\cap\dots\cap
X_n=f_{g[g_1\ldots g_n]}(X_1^n).
\end{align*}
Thus, $P(g[g_1\ldots g_n])=P(g)[P(g_1)\ldots P(g_n)]$, i.e., $P$
is a homomorphism of $G$ onto $\Phi$.

Suppose now that $P(g_1)=P(g_2)$, where $g_1,g_2\in G$. Then
$f_{g_1}=f_{g_2}$, i.e., $f_{g_1}(X_1^n)=f_{g_2}(X_1^n)$ for all
$X_1,\ldots,X_n\in\frak{P}(G)$. Hence $\omega\langle g_1
\rangle\cap X_1\cap\dots\cap X_n=\omega\langle g_2\rangle\cap
X_1\cap\dots\cap X_n$, which for $X_1=\dots=X_n=G$ gives
$\omega\langle g_1\rangle=\omega\langle g_2\rangle$. Since
$\omega$ is an antisymmetric relation, from the above we conclude
$g_1=g_2$. Thus $P$ is a bijection of $G$ onto $\Phi$. This means
that a Menger algebra $(G,o)$ of rank $n$ is isomorphic to the
constructed Menger algebra $(\Phi,\mathcal{O})$ of $n$-place
interior operations.
\end{proof}

\begin{corollary}\label{C-3}
A Menger algebra $(G,o)$ of rank $n$ is isomorphic to a Menger
algebra of $n$-place interior operations on some set if and only if
it is derived from an idempotent commutative semigroup.
\end{corollary}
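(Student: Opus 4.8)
The plan is to derive this corollary directly from Theorem~\ref{T-4} by reinterpreting the three identities \eqref{e9}, \eqref{e10}, \eqref{e11} in terms of the diagonal operation $x*y=x[y\ldots y]$. By Theorem~\ref{T-4}, $(G,o)$ is isomorphic to a Menger algebra of $n$-place interior operations precisely when \eqref{e9}, \eqref{e10} and \eqref{e11} hold, so it suffices to check that these three identities, taken together, say exactly that $(G,o)$ is derived from an idempotent commutative semigroup.

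First I would observe that \eqref{e9} is literally $x*x=x$ and \eqref{e10} is literally $x*y=y*x$, so they assert that the diagonal semigroup $(G,*)$ is idempotent and commutative, respectively. The one point deserving a short computation is that the right-hand side of \eqref{e11} equals $x*y_1*\cdots*y_n$: unfolding the definition of $*$ gives $x[y_1\ldots y_1]=x*y_1$, then $(x*y_1)[y_2\ldots y_2]=(x*y_1)*y_2$, and so on, the bracketing being harmless because $(G,*)$ is associative. Hence \eqref{e11} is exactly $x[y_1\ldots y_n]=x*y_1*\cdots*y_n$, i.e.\ the statement that $(G,o)$ is derived from its diagonal semigroup. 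Thus the conjunction of \eqref{e9}--\eqref{e11} says precisely that $(G,o)$ is derived from the idempotent commutative semigroup $(G,*)$, which settles the forward implication.

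For the converse I would assume $(G,o)$ is derived from some idempotent commutative semigroup $(G,\cdot)$, so that $f[g_1\ldots g_n]=f\cdot g_1\cdots g_n$. The key step here is to identify the diagonal operation with the ambient multiplication: since $x*y=x[y\ldots y]=x\cdot y\cdot y\cdots y=x\cdot y$, the $n$-fold product $y\cdot y\cdots y$ collapsing to $y$ by idempotency, we get $*=\cdot$. With this identification the three identities are immediate---\eqref{e9} becomes $x^{n+1}=x$, \eqref{e10} becomes $x\cdot y=y\cdot x$, and \eqref{e11} is the derivation identity itself---so Theorem~\ref{T-4} applies and gives the desired isomorphism.

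I expect the whole argument to be routine once \eqref{e11} is recognized as the derivation identity; the only place calling for attention is the collapse $y\cdot y\cdots y=y$ in the converse, which is exactly where idempotency is needed to make the diagonal semigroup coincide with the given multiplication.
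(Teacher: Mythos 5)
Your proof is correct and follows exactly the route the paper intends: Corollary~\ref{C-3} is stated there as an immediate consequence of Theorem~\ref{T-4}, with the translation of \eqref{e9}--\eqref{e11} into idempotency, commutativity, and the derivation identity for the diagonal semigroup left implicit. Your write-up simply makes that translation explicit (including the two points that genuinely need checking: associativity of $*$ making the bracketing in \eqref{e11} harmless, and the collapse $y\cdot y\cdots y=y$ identifying $*$ with the given multiplication in the converse), so there is nothing to add.
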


The above corollary says that a Menger algebra isomorphic to a
Menger algebra of $n$-place interior operations is an idempotent
commutative $n$-ary semigroup. Since a diagonal semigroup of a
group-like Menger algebra is a group (see \cite{Du'86} or
\cite{Dudtro3}), a group-like Menger algebra isomorphic to a
Menger algebra of $n$-place interior operations has only one
element. In view of Corollary \ref{C-3}, it is obvious that two Menger algebras of $n$-place interior
operations are isomorphic if and only if their diagonal semigroups
are isomorphic.

According to Theorem \ref{T-2} and Corollary \ref{C-2} each
$n$-place interior operation defined by \eqref{e14} is (full)
$\cup$-distributive and (full) $\cap$-distributive. Therefore, we
have the following corollary.

\begin{corollary} \label{C-4} For a Menger algebra $(G,o)$ of rank $n$ the following conditions are
equivalent:
\begin{itemize}
\item $(G,o)$ is derived from an idempotent commutative semigroup;
\item $(G,o)$ is isomorphic to a Menger algebra of contractive \textup(full\textup) $\cup$-distributive $n$-place
transformations on some set;
\item $(G,o)$ is isomorphic to a Menger algebra of \textup(full\textup) $\cup$-distributive $n$-place interior operations
on some set.
\item $(G,o)$ is isomorphic to a Menger algebra of \textup(full\textup) $\cup$-distributive and $\cap$-distributive
$n$-place interior operations on some set.
\end{itemize}
\end{corollary}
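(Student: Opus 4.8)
The plan is to prove the four listed conditions equivalent by closing the cycle of implications $(1)\Rightarrow(4)\Rightarrow(3)\Rightarrow(2)\Rightarrow(1)$, where I number the conditions $(1)$--$(4)$ from top to bottom. The engine of the whole argument is the concrete representation constructed in the sufficiency part of Theorem~\ref{T-4}, read together with the remark preceding the corollary; everything else is just a matter of invoking Theorem~\ref{T-2} and Corollaries~\ref{C-1}, \ref{C-2}, \ref{C-3} and reading off the relevant distributivity properties. I will carry the parenthetical ``(full)'' in parallel throughout, so that the full-distributive and the finitary readings of the statement are established simultaneously.

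For $(1)\Rightarrow(4)$ I would recall that, by Corollary~\ref{C-3}, condition~(1) is exactly the assertion that $(G,o)$ is isomorphic to a Menger algebra of $n$-place interior operations, and that the isomorphism produced in Theorem~\ref{T-4} identifies $(G,o)$ with the algebra $(\Phi,\mathcal{O})$ whose members are the operations $f_g$ of \eqref{e14}. Each such $f_g$ satisfies $f_g(X_1^n)=f_g(G,\ldots,G)\cap X_1\cap\dots\cap X_n$, i.e.\ the identity \eqref{e6}; hence by Theorem~\ref{T-2} it is contractive and full $\cup$-distributive, by Corollary~\ref{C-1} it is an $n$-place interior operation, and by Corollary~\ref{C-2} it is full $\cap$-distributive (and a fortiori the plain versions hold as well). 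Thus $(\Phi,\mathcal{O})$ realizes condition~(4) in both readings, which gives $(1)\Rightarrow(4)$.

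The implications $(4)\Rightarrow(3)\Rightarrow(2)$ are immediate weakenings of the hypotheses, so the only remaining work is $(2)\Rightarrow(1)$: starting from a representation of $(G,o)$ by contractive (full) $\cup$-distributive $n$-place transformations, I would apply Theorem~\ref{T-2} to conclude that every member satisfies \eqref{e6}, then use Corollary~\ref{C-1} to upgrade each to an $n$-place interior operation, so that $(G,o)$ becomes isomorphic to a Menger algebra of interior operations and Corollary~\ref{C-3} returns condition~(1). I do not expect a genuine obstacle here, since all the content already lives in the earlier results; the only point requiring care is to check that the single construction \eqref{e14} verifies the full and the finitary versions of \emph{both} distributivity laws at once, which is precisely what the chain Theorem~\ref{T-2}$\to$Corollary~\ref{C-2} supplies.
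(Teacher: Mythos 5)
Your proposal is correct and follows essentially the same route as the paper: the paper justifies Corollary~\ref{C-4} precisely by observing that the operations $f_g$ of \eqref{e14} constructed in Theorem~\ref{T-4} satisfy \eqref{e6} and hence, via Theorem~\ref{T-2} and Corollaries~\ref{C-1} and~\ref{C-2}, carry all the stated distributivity properties, while the reverse directions reduce (exactly as in your step $(2)\Rightarrow(1)$) to upgrading contractive (full) $\cup$-distributive transformations to interior operations and invoking Corollary~\ref{C-3}. Your explicit organization of this as the cycle $(1)\Rightarrow(4)\Rightarrow(3)\Rightarrow(2)\Rightarrow(1)$ merely makes the paper's implicit argument formal.
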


\section{Semigroups of interior operations} 

Menger algebras of rank $n=1$ are (binary)
semigroups. So, as a consequence of our results, we obtain some
useful facts for semigroups.

Recall that Vagner (cf.\ \cite{Vagner}) defined \textit{interior
operations} on the set $A$ as contractive, idempotent and isotone
transformations of $\frak{P}(A)$. This definition coincides with
our definition for $n=1$. So, as a consequence of Theorem
\ref{T-1} we obtain

\begin{corollary} \label{C-5} A transformation $f$ of \ $\frak{P}(A)$ is an interior operation
on the set $A$ if and only if
\[
f(X\cap Y)\subseteq f(f(X))\cap f(Y)\cap Y
\]
is valid for all $X,Y\in\frak{P}(A)$.
\end{corollary}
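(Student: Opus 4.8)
The plan is to derive this corollary directly from Theorem \ref{T-1} by specializing to the rank $n=1$. For $n=1$ an $n$-place transformation of $\frak{P}(A)$ is simply an ordinary transformation $f\colon\frak{P}(A)\to\frak{P}(A)$, and, as already recorded in Section 2, the $n=1$ instances of contractivity, idempotence, and isotonicity coincide with the three conditions in Vagner's definition of an interior operation. Hence condition (a) of Theorem \ref{T-1}, read at $n=1$, is exactly the assertion that $f$ is an interior operation on $A$ in the sense of Vagner, which is the left-hand side of the equivalence we want.

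Next I would unwind condition (b) of Theorem \ref{T-1} under the substitution $n=1$. The abbreviation $X_1^n$ collapses to the single set $X_1=X$, and $Y_1^n$ collapses to $Y_1=Y$; the iterated intersection $Y_1\cap\dots\cap Y_n$ reduces to the single term $Y$, and the diagonal argument $f(f(X_1^n),\ldots,f(X_1^n))$ reduces to $f(f(X))$. Consequently the inequality \eqref{e2} becomes precisely
\[
f(X\cap Y)\subseteq f(f(X))\cap f(Y)\cap Y
\]
for all $X,Y\in\frak{P}(A)$, which is exactly the inequality displayed in the statement.

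Since Theorem \ref{T-1} asserts the equivalence of (a) and (b) for every rank, in particular for $n=1$, the two displayed conditions are equivalent, and that is the claim. I expect no genuine obstacle here: the whole content of the corollary is the notational specialization just performed, and the only point deserving a word of care is the confirmation (already supplied in Section 2) that our definition of a $1$-place interior operation agrees with Vagner's, so that invoking Theorem \ref{T-1} at $n=1$ indeed yields the stated characterization of interior operations on $A$.
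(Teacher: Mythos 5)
Your proposal is correct and is exactly the paper's own route: Corollary \ref{C-5} is stated there as an immediate consequence of Theorem \ref{T-1} specialized to $n=1$, after the observation (which you also make) that the definition of a $1$-place interior operation coincides with Vagner's. Your careful unwinding of the notation in \eqref{e2} at $n=1$ matches what the paper leaves implicit.
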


From Theorem \ref{T-2} and Corollary \ref{C-1} we obtain

\begin{corollary}\label{C-6} For a transformation $f$ of \ $\frak{P}(A)$ the following conditions are equivalent:
\begin{itemize}
\item  $f$ is contractive and full $\cup$-distributive;
\item  $f$ is contractive and $\cup$-distributive;
\item  for every $X\in\frak{P}(A)$ we have
\begin{equation} \label{e15}
 f(X)=f(A)\cap X.
\end{equation}
\end{itemize}
\end{corollary}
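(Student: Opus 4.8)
The plan is to obtain Corollary \ref{C-6} simply as the specialization of Theorem \ref{T-2} to rank $n=1$, where a Menger algebra is an ordinary semigroup and an $n$-place transformation is just a unary transformation of $\frak{P}(A)$. The key observation is that each of the three conditions listed in the corollary is literally the $n=1$ instance of the corresponding condition (i), (ii), (iii) of Theorem \ref{T-2}, so no new argument is required beyond checking that the definitions degenerate correctly.

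First I would inspect the distributivity conditions. For $n=1$ the blocks $H_1^{i-1}$ and $H_{i+1}^n$ occurring in the definitions of (full) $\cup$-distributivity are empty (the only admissible index is $i=1$), so these conditions reduce to the familiar unary forms $f(X\cup Y)=f(X)\cup f(Y)$ and $f\big(\bigcup_{k\in K}X_k\big)=\bigcup_{k\in K}f(X_k)$. Thus the first two bullet points of the corollary coincide with (i) and (ii) of Theorem \ref{T-2}, while contractivity is unchanged. Setting $n=1$ in \eqref{e6} likewise leaves a single subset $X_1$, and the intersection $X_1\cap\dots\cap X_n$ collapses to $X_1$; writing $X$ for $X_1$ gives $f(X)=f(A)\cap X$, which is exactly \eqref{e15}. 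Hence the equivalence of the three bullet points is precisely the chain (i)~$\Leftrightarrow$~(ii)~$\Leftrightarrow$~(iii) of Theorem \ref{T-2} read at $n=1$, and this already establishes the corollary.

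There is essentially no obstacle here, since the result is a direct specialization; the only care needed is the bookkeeping that the empty argument-blocks really do reduce the distributivity definitions to their unary form, so that the two notions of $\cup$-distributivity in the corollary agree with those in Theorem \ref{T-2}. The sole point worth recording is the reference to Corollary \ref{C-1}: applied with $n=1$ it shows that any $f$ satisfying \eqref{e15} is automatically an interior operation on $A$. This is not needed for the equivalence itself, but it explains why Corollary \ref{C-1} is invoked alongside Theorem \ref{T-2} in deriving the result.
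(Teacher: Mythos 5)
Your proposal is correct and matches the paper exactly: the paper offers no separate argument for Corollary \ref{C-6}, deriving it directly as the $n=1$ specialization of Theorem \ref{T-2} (with the citation of Corollary \ref{C-1} serving, as you note, the companion statement that \eqref{e15} forces $f$ to be an interior operation, i.e.\ Corollary \ref{C-7}, rather than the equivalence itself). Your bookkeeping check that the empty blocks $H_1^{i-1}$, $H_{i+1}^n$ collapse the distributivity definitions to their unary forms is the only content needed, and it is done correctly.
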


\begin{corollary} \label{C-7} Any transformation $f$ of \
$\frak{P}(A)$ satisfying $\eqref{e15}$ is an interior operation on
$A$.
\end{corollary}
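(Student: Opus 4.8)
The plan is to prove Corollary \ref{C-7} as an immediate specialization of the earlier machinery to the case $n=1$. The statement asserts that any transformation $f$ of $\frak{P}(A)$ satisfying $f(X)=f(A)\cap X$ for all $X\in\frak{P}(A)$ is an interior operation on $A$, so the entire argument should be a one-line deduction from Corollary \ref{C-1}. First I would observe that condition \eqref{e15}, namely $f(X)=f(A)\cap X$, is exactly the $n=1$ instance of condition \eqref{e6}, which reads $f(X_1^n)=f(A,\ldots,A)\cap X_1\cap\dots\cap X_n$: when $n=1$ this collapses to $f(X_1)=f(A)\cap X_1$. Since an interior operation (in Vagner's sense, which coincides with ours for $n=1$) is precisely a $1$-place interior operation, Corollary \ref{C-1} directly applies.

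Concretely, I would write that Corollary \ref{C-1} guarantees that every $n$-place transformation satisfying \eqref{e6} is an $n$-place interior operation; taking $n=1$ and noting that \eqref{e15} is \eqref{e6} specialized to a single variable, it follows immediately that any $f$ satisfying \eqref{e15} is a $1$-place interior operation, that is, an interior operation on $A$. No further verification is needed because all three defining properties (contractivity, idempotency, isotonicity) were already checked in the proof of Corollary \ref{C-1} in full generality.

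There is essentially no obstacle here; the only thing to be careful about is confirming that the specialization $n=1$ of \eqref{e6} genuinely produces \eqref{e15}, which it does since the product $X_1\cap\dots\cap X_n$ degenerates to the single term $X$. Thus the proof reduces to invoking Corollary \ref{C-1} with $n=1$.

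\begin{proof}
For $n=1$ the condition \eqref{e6} of Theorem \ref{T-2} becomes $f(X)=f(A)\cap X$, which is precisely \eqref{e15}. Since an interior operation on $A$ is exactly a $1$-place interior operation on $A$, the assertion follows immediately from Corollary \ref{C-1} applied with $n=1$.
\end{proof}
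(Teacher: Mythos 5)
Your proof is correct and matches the paper's intent exactly: the paper presents Corollary \ref{C-7} as an immediate consequence of Corollary \ref{C-1} specialized to $n=1$, after having noted that Vagner's interior operations coincide with $1$-place interior operations. Your observation that \eqref{e15} is precisely \eqref{e6} for $n=1$ is the whole argument, and all three defining properties were indeed already verified in the proof of Corollary \ref{C-1}.
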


As a consequence of Corollary \ref{C-2} we obtain

\begin{corollary} \label{C-8} Every
\textup(full\textup) $\cup$-distributive interior operation is \textup(full\textup)
$\cap$-dis\-tri\-butive.
\end{corollary}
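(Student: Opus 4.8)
The plan is to obtain Corollary \ref{C-8} as the rank-one specialization of Corollary \ref{C-2}. First I would note that for $n=1$ the four distributivity notions introduced in the preliminaries collapse to their familiar single-variable forms: the only admissible index is $i=1$, so the prefix $H_1^{i-1}$ and the suffix $H_{i+1}^n$ in each defining identity are empty lists, and the conditions become $f(X\cup Y)=f(X)\cup f(Y)$, $f(X\cap Y)=f(X)\cap f(Y)$, and their arbitrary-family analogues. Since a $1$-place interior operation is exactly an interior operation in Vagner's sense (as recorded just before Corollary \ref{C-5}), the assertion of Corollary \ref{C-8} is then word for word the $n=1$ instance of Corollary \ref{C-2}, and nothing further is needed.

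If instead a self-contained argument is wanted, I would route through Corollary \ref{C-6}. A $\cup$-distributive interior operation $f$ is in particular contractive and $\cup$-distributive, so Corollary \ref{C-6} yields $f(X)=f(A)\cap X$ for every $X\in\frak{P}(A)$. From this single formula both conclusions drop out by elementary set algebra: for arbitrary $X,Y\in\frak{P}(A)$,
\[
f(X\cap Y)=f(A)\cap X\cap Y=\bigl(f(A)\cap X\bigr)\cap\bigl(f(A)\cap Y\bigr)=f(X)\cap f(Y),
\]
where the middle equality uses the idempotence $f(A)\cap f(A)=f(A)$; and for an arbitrary family $(X_k)_{k\in K}$,
\[
f\Big(\bigcap_{k\in K}X_k\Big)=f(A)\cap\bigcap_{k\in K}X_k=\bigcap_{k\in K}\bigl(f(A)\cap X_k\bigr)=\bigcap_{k\in K}f(X_k),
\]
which give $\cap$-distributivity and full $\cap$-distributivity respectively.

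I do not expect any genuine obstacle here. The only steps that demand a moment's attention are the bookkeeping of the empty prefix/suffix in the $n=1$ reduction, and, in the infinitary case, the harmless proviso that $K$ be nonempty so that the common factor $f(A)$ may be pulled out of the intersection $\bigcap_{k\in K}\bigl(f(A)\cap X_k\bigr)$.
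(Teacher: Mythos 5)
Your proposal is correct and matches the paper's treatment: the paper obtains Corollary \ref{C-8} precisely as the $n=1$ specialization of Corollary \ref{C-2}, which is your first argument. Your optional self-contained route via Corollary \ref{C-6} and the identity $f(X)=f(A)\cap X$ is just the rank-one instance of how the paper itself proves Corollary \ref{C-2} from Theorem \ref{T-2}, so it introduces nothing genuinely different.
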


Putting $n=1$ in Theorem \ref{T-3} we obtain one of the main
results of the paper \cite{Kulik}:

\begin{corollary}\label{C-9} The composition $f\circ g$ of two interior operations defined on the same set is
an interior operation on this set if and only if \ $f\circ g=f\circ g\circ f$.
\end{corollary}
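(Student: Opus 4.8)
The plan is to obtain this as the $n=1$ specialization of the machinery already built up. The first step is a translation of notation: when $n=1$, the Menger superposition $f[g](X)=f(g(X))$ is precisely the ordinary composition $(f\circ g)(X)$, so $f[g]=f\circ g$. Consequently the diagonal operation also collapses to composition, since $f*g=f[g\ldots g]=f[g]=f\circ g$ for $n=1$. Thus the Menger algebra $(\mathcal{T}_1(\frak{P}(A)),\mathcal{O})$ is nothing but the full transformation semigroup of $\frak{P}(A)$ under $\circ$, and ``$1$-place interior operation'' is exactly Vagner's notion of interior operation as recalled before Corollary~\ref{C-5}.

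With this identification, I would simply invoke Proposition~\ref{P-2} with $n=1$. Its equivalence (a)~$\Longleftrightarrow$~(c) reads: for interior operations $f,g$, the map $f*g=f\circ g$ is an interior operation if and only if $f*g=f*g*f$, i.e.\ if and only if $f\circ g=f\circ g\circ f$. This is verbatim the assertion of the corollary, so no further work is needed once the $n=1$ dictionary is in place.

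Alternatively, to follow the route suggested by the phrase ``putting $n=1$ in Theorem~\ref{T-3},'' I would apply Theorem~\ref{T-3} directly: for $n=1$ its defining condition \eqref{e8} becomes $g[f][g]=f[g]$, that is $g\circ f\circ g=f\circ g$. This is condition (b) of Proposition~\ref{P-2}, and the equivalence (b)~$\Longleftrightarrow$~(c) established there rewrites it as $f\circ g=f\circ g\circ f$, again giving the claim. Both paths lead to the same place, and I would present whichever is shorter in context — most likely the single line citing Proposition~\ref{P-2}~(a)$\Leftrightarrow$(c).

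There is no genuine obstacle here: the corollary is a pure specialization. The only point demanding a word of care is the notational reduction in the first step, namely confirming that for $n=1$ the $(n+1)$-ary superposition and the derived diagonal product both coincide with $\circ$, so that the abstract hypotheses of Theorem~\ref{T-3} and Proposition~\ref{P-2} really do match the composition of transformations appearing in the statement. Once that is observed, the equivalence is immediate.
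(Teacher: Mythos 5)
Your proposal is correct and takes essentially the same route as the paper, which derives this corollary purely by specializing its general machinery to $n=1$ (the paper literally says ``putting $n=1$ in Theorem~\ref{T-3}''). In fact your treatment is slightly more careful than the paper's: you correctly observe that the literal $n=1$ reading of Theorem~\ref{T-3} gives $g\circ f\circ g=f\circ g$ (condition (b) of Proposition~\ref{P-2}), so that reaching the stated form $f\circ g=f\circ g\circ f$ requires the equivalence (b)~$\Longleftrightarrow$~(c) of Proposition~\ref{P-2} --- or, most directly, its equivalence (a)~$\Longleftrightarrow$~(c) --- a bridging step the paper leaves implicit.
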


The other main result of \cite{Kulik} is a consequence of our
Corollary \ref{C-4} which for $n=1$ has the form:

\begin{corollary} \label{C-10}
For a semigroup $(G,\cdot)$ the following statements are
equivalent:
\begin{itemize}
\item $(G,\cdot)$ is an idempotent commutative semigroup;
\item $(G,\cdot)$ is isomorphic to a semigroup of contractive and \textup(full\textup) $\cup$-distributive
transformations on some set;
\item $(G,\cdot)$ is isomorphic to a semigroup of \textup(full\textup) $\cup$-distributive interior operations
on some set. 
\item $(G,\cdot)$ is isomorphic to a semigroup of
\textup(full\textup) $\cup$-distributive and $\cap$-distri\-butive interior
operations on some set.
\end{itemize}
\end{corollary}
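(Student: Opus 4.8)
The plan is to obtain this corollary as the rank-$1$ case of Corollary~\ref{C-4}, so the whole task reduces to checking that each of the four conditions there specializes correctly when $n=1$. The first thing I would record is that a Menger algebra of rank $1$ is exactly a semigroup: for $n=1$ the superassociative law \eqref{e1} becomes $f[g][h]=f[g[h]]$, which is precisely associativity of the binary operation $(f,g)\mapsto f[g]$. Thus ``Menger algebra of rank $1$'' and ``semigroup'' are interchangeable.

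Next I would pin down the diagonal semigroup and the notion of a \emph{derived} algebra in this case. The diagonal operation $x*y=x[y\ldots y]$ collapses, for $n=1$, to $x*y=x[y]=x\cdot y$, so the diagonal semigroup of $(G,\cdot)$ is $(G,\cdot)$ itself. The defining identity of a derived algebra, $f[g_1\ldots g_n]=f*g_1*\cdots*g_n$, reads simply $f[g]=f*g$ when $n=1$ and is therefore automatically true. Hence every semigroup is, as a rank-$1$ Menger algebra, derived from its diagonal semigroup, and the first condition of Corollary~\ref{C-4}---``$(G,o)$ is derived from an idempotent commutative semigroup''---reduces to the plain statement ``$(G,\cdot)$ is an idempotent commutative semigroup.''

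Finally I would read off the three remaining conditions. A Menger algebra of $n$-place transformations for $n=1$ is just a semigroup of one-place transformations of $\frak{P}(A)$ under ordinary composition, and for $n=1$ the properties of contractivity, $\cup$-distributivity, $\cap$-distributivity, and being an interior operation are exactly the definitions recalled in the preliminaries. Consequently the three transformation-theoretic conditions of Corollary~\ref{C-4} specialize word for word to the corresponding conditions of the present statement, and invoking Corollary~\ref{C-4} with $n=1$ yields all four equivalences simultaneously. Since the argument is a pure specialization, there is no real obstacle; the one point deserving care is confirming that the ``derived'' hypothesis is vacuous for $n=1$, so that the abstract first condition of Corollary~\ref{C-4} genuinely collapses to idempotent commutativity of $(G,\cdot)$ and nothing stronger.
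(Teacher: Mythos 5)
Your proposal is correct and coincides with the paper's own treatment: the paper states Corollary~\ref{C-10} precisely as the $n=1$ specialization of Corollary~\ref{C-4}, with no separate argument beyond that observation. Your careful verification that a rank-$1$ Menger algebra is just a semigroup, that the diagonal semigroup is then the semigroup itself, and that the ``derived'' condition is vacuous for $n=1$ (so the first condition collapses to plain idempotent commutativity) is exactly the implicit content of the paper's reduction.
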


From this we obtain

\begin{corollary}[Podluzhnyak and Kulik~\cite{Kulik}]
\label{C-11}
A semigroup $(G,\cdot)$ is isomorphic to some semigroup of interior
operations on some set $A$ if and only if it is idempotent and
commutative.
\end{corollary}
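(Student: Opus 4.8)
The plan is to read off the statement as the $n=1$ specialization of the theory already built up, using the dictionary that for rank $1$ a Menger algebra is an ordinary semigroup, the $(n+1)$-ary superposition $f[g_1\ldots g_n]$ becomes the composition $f\circ g$, the diagonal operation $*$ coincides with $\circ$, and a $1$-place interior operation is exactly an interior operation in Vagner's sense. Under this dictionary the statement is essentially Corollary \ref{C-3} taken at $n=1$, but I would argue the two implications separately so that the role of each earlier result is visible.

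For sufficiency, suppose $(G,\cdot)$ is idempotent and commutative. Viewed as a rank-$1$ Menger algebra it is trivially derived from its own diagonal semigroup, and that semigroup is idempotent and commutative; the relevant implication of Corollary \ref{C-10} (at $n=1$) then produces an isomorphism onto a semigroup of $\cup$-distributive interior operations, which are in particular interior operations. Concretely, the transformations $f_g$ constructed from \eqref{e14} in the proof of Theorem \ref{T-4} are genuine interior operations by Corollary \ref{C-1}, so this direction is immediate.

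For necessity, assume $(G,\cdot)$ is isomorphic to a semigroup $\Phi$ of interior operations on $A$, so that $\Phi$ is closed under $\circ$ and every member is contractive, idempotent and isotone. Idempotency of $(G,\cdot)$ is automatic, since $f\circ f=f$ for each $f\in\Phi$ by the definition of an interior operation. For commutativity I would fix $f,g\in\Phi$ and use closure: both $f\circ g$ and $g\circ f$ lie in $\Phi$ and are therefore interior operations. Proposition \ref{P-2} (equivalently Corollary \ref{C-9}) applied to the pair $(f,g)$ gives $f\circ g=f\circ g\circ f$, while applied to the pair $(g,f)$ it gives $g\circ f=f\circ g\circ f$; comparing the two right-hand sides yields $f\circ g=g\circ f$.

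The one point that needs care — and the main obstacle to a purely one-line derivation — is that necessity cannot be obtained from Corollary \ref{C-10} alone, because that corollary characterizes semigroups of $\cup$-\emph{distributive} interior operations, whereas the $\Phi$ at hand may contain interior operations that are not $\cup$-distributive. The substantive step is thus the commutativity argument, and its engine is the equivalence of conditions (a), (b), (c) in Proposition \ref{P-2}: it is precisely the closure of $\Phi$ under composition in \emph{both} orders that upgrades the one-sided identities $f\circ g=f\circ g\circ f$ and $g\circ f=f\circ g\circ f$ into full commutativity.
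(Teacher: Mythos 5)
Your proof is correct, and your diagnosis of the subtle point is exactly right. The paper's literal derivation of this corollary is a one-liner --- ``From this [Corollary~\ref{C-10}] we obtain'' --- which, read strictly, covers only sufficiency: Corollary~\ref{C-10} characterizes semigroups of $\cup$-distributive interior operations, and a semigroup of arbitrary interior operations need not consist of $\cup$-distributive ones. The necessity argument you supply is, however, present elsewhere in the paper: it is precisely the step in the necessity part of the proof of Theorem~\ref{T-4} that establishes identity \eqref{e10}, where closure of $\Phi$ under composition together with conditions (b) and (c) of Proposition~\ref{P-2} yields $f*g=g*f$; your argument with the two pairs $(f,g)$ and $(g,f)$ reproduces that step verbatim at $n=1$, and your idempotency observation is the $n=1$ case of \eqref{e9}. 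An even shorter in-paper route, which you mention only in passing, is Corollary~\ref{C-3} (equivalently Theorem~\ref{T-4}) at rank $1$: every semigroup is trivially derived from its own diagonal semigroup, identity \eqref{e11} is vacuous for $n=1$, and \eqref{e9}, \eqref{e10} become idempotency and commutativity, so the rank-$1$ specialization is literally the statement to be proved, both directions at once. In short: your proof is valid, it fills a real gap in the paper's stated one-line derivation, and it does so using the paper's own tools rather than any new machinery.
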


\end{document}